\theoremstyle{plain}
\newtheorem{theorem}{Theorem}[section]
\newtheorem{definition}[theorem]{Definition}
\newtheorem{lemma}[theorem]{Lemma}
\newtheorem{proposition}[theorem]{Proposition}
\newtheorem{corollary}[theorem]{Corollary}
\newtheorem{remark}[theorem]{Remark}
\newtheorem{thmint}{Theorem}
\newtheorem{corint}[thmint]{Corollary}
\renewcommand{\b}{\begin{equation}}
\newcommand{\e}{\end{equation}}
\newcommand{\g}{\mathfrak{g}}
\newcommand{\h}{\mathfrak{h}}
\newcommand{\n}{\mathfrak{n}}
\newcommand{\p}{\mathfrak{p}}
\newcommand{\z}{\mathfrak{z}}
\renewcommand{\l}{\mathfrak{l}}
\newcommand{\la}{\langle}
\newcommand{\ra}{\rangle}
\newcommand\C{{\mathbb C}}
\newcommand\R{{\mathbb R}}
\renewcommand\H{{\mathbb H}}
\newcommand{\tr}{\operatorname{tr}}
\newcommand{\id}{\operatorname{Id}}
\newcommand{\kk}{\operatorname{K}}
\newcommand{\ip}{\langle\cdot,\cdot\rangle}
\newcommand{\HCFu}{\text{HCF}_+}
\title[Positive HCF on complex 2-step nilpotent Lie groups]{Positive Hermitian curvature flow on complex 2-step nilpotent Lie groups}
\subjclass[2010]{Primary 53C44; Secondary 53C15, 53C07, 53B15}
\thanks{This work was supported by G.N.S.A.G.A. of I.N.d.A.M} 
\author{Mattia Pujia}
\address{Dipartimento di Matematica G. Peano, Universit\`a di Torino, Via Carlo Alberto 10, 10123 Torino, Italy}
\email{mattia.pujia@unito.it}
\date{\today}
\begin{document} 

\begin{abstract} We study the positive Hermitian curvature flow of left-invariant metrics on complex 2-step nilpotent Lie groups. In this setting we completely characterize the long-time behaviour of the flow, showing that normalized solutions to the flow subconverge to a non-flat algebraic soliton, in Cheeger-Gromov topology. We also exhibit a uniqueness result for algebraic solitons on such Lie groups.
\end{abstract}

\maketitle
\section*{Introduction}
In 2011, Streets and Tian introduced a new family of parabolic flows generalizing the K\"ahler-Ricci flow to the Hermitian setting \cite{ST11}. More precisely, given a complex manifold $X$, any flow in the family evolves a Hermitian metric $g$ on $X$ via
\begin{equation}\label{HCFS}
\partial_t\, g_t= -S(g_t)+Q(g_t)\,,\qquad {g_t}_{|_0}=g\,,
\end{equation}
where $S(g)$ is the second Chern-Ricci curvature tensor of $g$ on $X$ and $Q(g)$ is a $(1,1)$-symmetric tensor quadratic in the torsion of the Chern connection. The flows belonging to such a family are usually called {\em Hermitian curvature flows} (HCFs for short).

In \cite{ST11}, Streets and Tian chose the tensor $Q$ in order to obtain a {\em gradient flow}, stable near K\"ahler-Einstein metrics with non-positive scalar curvature and satisfying many other analytical properties; while, in the subsequent paper \cite{ST10}, $Q$ was chosen so that the {\em pluriclosed condition} was preserved ($\partial \bar \partial \omega=0$) (see also \cite{S16, S18, ST12, ST13}). On the other hand, since the tensor $Q$ does not affect the parabolicity of the evolution equation \eqref{HCFS}, different choices of $Q$ can be performed in order to preserve other properties.\medskip

In \cite{U19}, following Streets and Tian's approach, Ustinovskiy introduced a HCF preserving both the Griffiths-positivity and the dual Nakano-positivity of the tangent bundle. More precisely, given a compact Hermitian manifold $(X,g)$, Ustinovskiy considered the evolution equation
\begin{equation}\label{HCF}
\partial_t g_t= -S(g_t)- \widetilde Q(g_t)\,, \qquad {g_t}_{|_0}=g\,.
\end{equation}
Here, we denote by: $\nabla$ the Chern connection of $(X,g)$, $\Omega$ the curvature tensor of $\nabla$, $S(g)$ the $(1,1)$-symmetric tensor 
$$
S_{i\bar j}=g^{k\bar l}\Omega_{k\bar l i\bar j}\,,$$
and $\widetilde Q(g)$ the tensor given by
$$
2\, \widetilde Q_{i\bar j}= g^{k\bar l}g^{m\bar n}T_{km \bar j}T_{\bar l \bar n i}\,,
$$
where $T_{km \bar j}:=g_{l\bar j}T_{km}^l$ and $T_{km}^l$ are the components of the torsion of $\nabla$. We will refer to \eqref{HCF} as to the {\em positive Hermitian curvature flow} ($\HCFu$ for short) and we set
$$
K(g):=S(g)+\widetilde Q(g)\,,
$$
for any Hermitian metric $g$ on $X$.\medskip

The aim of the present paper is to study the behaviour of the $\HCFu$ on complex nilpotent Lie groups when the initial metrics are left-invariant. Although in the non-compact case existence and uniqueness of left-invariant solutions to \eqref{HCF} are not always guaranteed, in our setting the invariance by biholomorphisms of the flow implies that \eqref{HCF} can be reduced to an ODE on the Lie algebra of the group. Hence, existence and uniqueness among left-invariant solutions follow from the standard ODE theory.\medskip

Our first result completely describes the long-time behaviour of the $\HCFu$ on a complex 2-step nilpotent Lie group.
\begin{thmint}\label{main}
Any left-invariant solution $g_t$ to the $\HCFu$ on a complex 2-step nilpotent Lie group is immortal, and $(1 + t)^{-1}g_t$ subconverges as $t\to \infty$ to a non-flat algebraic soliton $(\bar N, \bar g)$, in the Cheeger-Gromov topology.  
\end{thmint}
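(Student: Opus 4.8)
The plan is to follow the bracket-flow strategy of Lauret, adapted to the Hermitian operator $K=S+\wt{Q}$. The first step is to pass from the evolution of the metric $g_t$ on the fixed group $N$ to an evolution of the Lie bracket on a fixed Hermitian vector space. Fixing a complex vector space $\n$ with a background Hermitian inner product $\ip$ and letting $\mu_0$ be the bracket of the given Lie algebra, I regard $K$ as a $\ip$-Hermitian endomorphism $K_\mu$ depending on the bracket $\mu$, and introduce the bracket flow
\begin{equation*}
\frac{d}{dt}\mu_t=-\pi(K_{\mu_t})\,\mu_t,\qquad \pi(A)\mu:=A\,\mu(\cdot,\cdot)-\mu(A\,\cdot,\cdot)-\mu(\cdot,A\,\cdot),
\end{equation*}
on the variety of $2$-step nilpotent brackets. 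I would then prove the standard gauge equivalence: $g_t$ solves \eqref{HCF} on $N$ if and only if $\mu_t=h_t\cdot\mu_0$ solves the bracket flow for some curve $h_t\in\mathrm{GL}(\n)$, in which case $(N,g_t)$ is isometric, as a Hermitian manifold, to the Lie group $(N_{\mu_t},\ip)$. This reduces the whole analysis to the single ODE above.

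The second step is to recognize this ODE as a moment-map flow. Using the $2$-step nilpotent structure $\n=\v\oplus\z$ with $\mu(\v,\v)\subseteq\z$ and $\mu(\n,\z)=0$, I would compute $S_\mu$ and $\wt{Q}_\mu$ explicitly in terms of the structure constants and establish the key algebraic identity $K_\mu=c_1\,m(\mu)+c_2\,\id$, where $m$ is the moment map for the $\mathrm{GL}(\n)$-action on brackets and $c_1,c_2$ are fixed constants. Since $\pi(\id)\mu=-\mu$ acts only by rescaling, the $c_2$-term contributes purely to the scale, while the $m(\mu)$-term is, up to the constant $c_1$, the gradient of $\mu\mapsto\tfrac12|m(\mu)|^2$; hence the normalized flow is the negative gradient flow of the squared moment-map norm on the sphere of brackets, and $|m|^2$ is a Lyapunov function. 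Its critical points are exactly the algebraic solitons, i.e. the brackets with $K_\mu=c\,\id+D_\mu$ for some $c\in\R$ and $D_\mu\in\mathrm{Der}(\mu)$. The nonnegativity of $\wt{Q}_\mu$ together with the trace identities then yields a differential inequality for $|\mu_t|$ ruling out finite-time blow-up, so the solution is immortal.

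Next I would analyze the scaling. Because the limiting operator behaves asymptotically like a negative multiple of the metric, the natural normalization is $(1+t)^{-1}g_t$; concretely, I would show that after this rescaling the associated bracket $\bar\mu_t$ satisfies $0<\delta\le|\bar\mu_t|\le C<\infty$ for all large $t$. The upper bound, together with the compactness of the sphere of brackets, yields a subsequence $\bar\mu_{t_k}\to\bar\mu$; by the Lyapunov structure the limit $\bar\mu$ is a critical point of $|m|^2$, hence an algebraic soliton. Transporting this back through the gauge equivalence gives subconvergence of $(1+t)^{-1}g_t$ to the algebraic soliton $(\bar N,\bar g)$ in the Cheeger-Gromov topology.

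The main obstacle I anticipate is the non-degeneration of the limit, that is, proving that $(\bar N,\bar g)$ is non-flat. Since $2$-step nilpotency is a closed condition preserved by the flow, flatness of the limit is equivalent to $\bar\mu=0$, so what must be excluded is a total collapse of the bracket under the chosen normalization; this is precisely the lower bound $|\bar\mu_t|\ge\delta>0$. Establishing it is the delicate point, as it amounts to showing that the rate $(1+t)^{-1}$ matches the genuine growth rate of $g_t$, so that the normalized bracket cannot drift into the abelian stratum. I would obtain this by using the explicit $2$-step formulas for $S_\mu$ and $\wt{Q}_\mu$ to bound $\tr K_\mu$ from below in terms of $|\mu|^2$, which pins down the growth rate and forces $\bar\mu\neq 0$. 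The remaining points — that $\bar\mu$ is again $2$-step nilpotent and that the soliton constant has the expected sign — are then routine consequences of the closedness of the relevant strata and of the trace identities.
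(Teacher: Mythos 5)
Your overall architecture --- reduction to the bracket flow, a moment-map interpretation of $\kk_\mu$, a gradient-flow-plus-compactness argument for convergence, and the scaling analysis matching $(1+t)^{-1}$ with $\lVert\mu_t\rVert\sim t^{-1/2}$ --- is the same as the paper's. However, the step you call the ``key algebraic identity,'' namely $K_\mu=c_1\,m(\mu)+c_2\,\id$ with $m$ the moment map of the \emph{full} $\mathrm{Gl}(\n)$-action, is false, and this is exactly the point where the $\HCFu$ on $2$-step nilpotent groups differs from the `original' HCF of \cite{LPV}. On a complex Lie group all mixed brackets vanish, so $S\equiv 0$ and $K=\wt{Q}$; by Proposition \ref{lemm_tens} the operator $\kk_\mu$ is positive semidefinite with image in $\mu(\n,\n)\subseteq\z$, and it vanishes identically on $\z^\perp$ (this is \eqref{K_oper}). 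By contrast, the full moment map $m(\mu)$ (for nilpotent brackets, essentially the Ricci operator) has a nonzero negative-semidefinite block on $\z^\perp$, $\la m(\mu)X,X\ra\sim-\sum_i\lVert\mu(X,e_i)\rVert^2$, which for a generic $2$-step bracket is \emph{not} a multiple of $\id_{\z^\perp}$. Hence no constants $c_1,c_2$ exist, the $\HCFu$ bracket flow is not the negative gradient flow of $\lvert m\rvert^2$, and its fixed points are not a priori the critical points of $\lvert m\rvert^2$ (those are the Ricci nilsolitons, which a given $2$-step nilpotent group need not even admit). Everything downstream of this identity --- your Lyapunov function and the identification of subsequential limits with algebraic $\HCFu$-solitons --- inherits the gap.

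The repair is the paper's Lemma \ref{lemm_GIT}: the map $\mu\mapsto 2\kk_\mu/\lVert\mu\rVert^2$ \emph{is} a moment map, but only for the subgroup $\mathrm{Gl}(\z,J)$, i.e. $\la\kk_\mu,E\ra=\tfrac12\la\pi(E)\mu,\mu\ra$ holds for all $E\in\p(\z,J)$; this restricted identity suffices. Since $\kk_\mu$ itself lies in $\p(\z,J)$, taking $E=\kk_{\mu_t}$ gives $\tfrac{d}{dt}\lVert\mu_t\rVert^2=-4\lVert\kk_{\mu_t}\rVert^2\le0$, hence immortality (your positivity argument reaches the same conclusion, since $\pi(\kk_\mu)\mu=\kk_\mu\,\mu(\cdot,\cdot)$ once $\kk_\mu$ is supported on the center). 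More importantly, the gradient-flow structure survives the restriction: by \cite[Lemma 7.2]{BL}, which applies to moment maps of reductive subgroup actions, the norm-normalized bracket flow is (up to a constant and a time reparametrization) the negative gradient flow of the real-analytic functional $F(\nu)=\lVert\kk_\nu\rVert^2/\lVert\nu\rVert^4$, and {\L}ojasiewicz's theorem upgrades subsequential to full convergence of $\nu_t$; the fixed-point equation $\pi(\kk_{\bar\nu}+r_{\bar\nu}\id_\n)\bar\nu=0$ then exhibits the limit as an algebraic $\HCFu$-soliton. Note also that your worry about the normalized bracket ``drifting into the abelian stratum'' disappears once one normalizes by the norm ($\lVert\nu_t\rVert\equiv1$), and non-flatness is immediate from $\tr\kk_{\bar\nu}=\tfrac12\lVert\bar\nu\rVert^2\neq0$; the only place the rate $(1+t)^{-1}$ enters is the estimate $\lVert\mu_t\rVert\sim t^{-1/2}$, which follows, as you propose, from $\tr\kk_\mu=\tfrac12\lVert\mu\rVert^2$ together with the flow equation.
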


By convergence in the Cheeger-Gromov topology we mean that: there exists a family of biholomorphisms $\varphi_t : \Omega_t \subset \bar N \to \varphi_t(\Omega_t) \subset N$ mapping the identity of $\bar N$ into the identity of $N$, such that the open sets $\{\Omega_t\}$ exhaust $\bar N$, and in addition $\varphi_t^* g_{t} \to \bar g$ as $t\to\infty$, uniformly over compact subsets in the $C^\infty$-topology.\medskip 

Next we focus on soliton solutions to the flow. A Hermitian metric $g$ on a complex Lie group $G$ is a {\em soliton to the $\HCFu$} if 
\begin{equation}\label{soliton}
K(g) = c\, g +\mathcal{L}_Z g\,,
\end{equation}
for some $c\in \R$ and a complete holomorphic vector field $Z$. Here $\mathcal{L}$ denotes the Lie derivative. Notice that, soliton metrics are rather important in the study of the $\HCFu$. Indeed, by the scale and biholomorphisms invariance of the $\HCFu$-tensor, any initial metric $g$ satisfying \eqref{soliton} gives rise to a self-similar solution 
$$
g_t=s(t)\varphi_t^\ast(g)
$$ 
to the $\HCFu$, for some smooth scaling function $s(t)>0$ and a one-parameter family of biholomorphisms $\varphi_t:G\to G$. If furthermore $g$ is left-invariant and $\varphi_t$ may be chosen to be a family of Lie group automorphisms, then the soliton $g$ is said to be {\em algebraic}.\medskip

Our second result is about the uniqueness of algebraic solitons on complex 2-step nilpotent Lie groups. 

\begin{thmint}\label{main_sol} Any complex 2-step nilpotent Lie group $N$ admits at most one algebraic $\HCFu$-soliton up to homotheties. Moreover, any algebraic $\HCFu$-soliton on $N$ is expanding (i.e. $c<0$ in \eqref{soliton}).
\end{thmint}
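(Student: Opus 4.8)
The plan is to recast the problem in the language of moving brackets and Geometric Invariant Theory. I fix the underlying real vector space $\n$ together with its complex structure $J$ and a background Hermitian inner product $\ip$, and I encode a left-invariant Hermitian metric on $N$ by recording the Lie bracket $\mu\in V:=\Lambda^2\n^*\otimes\n$ instead of the metric, letting the reductive group $G:=\operatorname{GL}(\n,J)$ of complex-linear automorphisms act on $V$ by $h\cdot\mu:=h\,\mu(h^{-1}\cdot,h^{-1}\cdot)$. Under this dictionary every left-invariant Hermitian metric on a fixed $N$ is represented by a point of the single orbit $G\cdot\mu_0$, where $\mu_0$ is the structure bracket of $\n$, and two metrics are homothetic precisely when their brackets lie in the same $\R^{>0}\cdot\operatorname{U}(\n)$-orbit (here one uses that an isometry of a nilpotent Lie group fixing the identity is an automorphism). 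The tensor $K$ becomes a $G$-equivariant assignment $\mu\mapsto K_\mu$ of Hermitian endomorphisms, and the first step is to rewrite \eqref{soliton}: since for an algebraic soliton the biholomorphisms $\varphi_t$ form a one-parameter group of automorphisms, the field $Z$ corresponds to a Hermitian derivation $D\in\operatorname{Der}(\mu)$, and \eqref{soliton} is equivalent to the purely algebraic equation $K_\mu=c\,\id+D$ with $c\in\R$.

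The heart of the argument is the identification of $K_\mu$ with a moment map. Let $\pi$ denote the representation of $\operatorname{End}(\n,J)$ on $V$ induced by the $G$-action, so that $\operatorname{Der}(\mu)=\{A:\pi(A)\mu=0\}$. I would prove that, up to a universal positive constant, $K_\mu$ is the moment map $M_\mu$ for the $G$-action, characterized by $\tr(M_\mu A)=\tfrac14\la\pi(A)\mu,\mu\ra$ for all $A\in\operatorname{End}(\n,J)$. This is the Hermitian analogue of Lauret's identification of the Ricci operator with the moment map for $\operatorname{GL}(\n,\R)$, and establishing it is where the genuine computation lies: one must expand $S_\mu$ and $\widetilde Q_\mu$ in terms of $\mu$ and verify that precisely the combination $K=S+\widetilde Q$ singled out by Ustinovskiy matches the quadratic form $\la\pi(A)\mu,\mu\ra$. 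The $2$-step hypothesis $[\n,\n]\subseteq\z$ is what keeps this tractable, for then, writing $\n=\v\oplus\z$ with $\v,\z$ both $J$-invariant, $\mu$ is supported on $\v\times\v\to\z$ and both the Chern curvature and the torsion take a simple block form. Granting this identification, two structural facts follow immediately: taking $A=\id$ gives $\tr(K_\mu)=\tfrac14\la\pi(\id)\mu,\mu\ra=-\tfrac14\|\mu\|^2$, so $\tr(K_\mu)<0$ whenever $\mu\ne0$; and taking $A=D$ a derivation gives $\tr(K_\mu D)=0$. Moreover the soliton equation $K_\mu=c\,\id+D$ now reads exactly as the condition that $\mu$ be a critical point of the scale-invariant functional $F(\mu)=\|M_\mu\|^2$ restricted to the projectivized orbit $\mathbb{P}(G\cdot\mu_0)$.

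Uniqueness up to homothety is then a direct consequence of the uniqueness of critical points of the moment-map norm inside a single orbit. By Ness's theorem, the critical points of $F$ lying in $\mathbb{P}(G\cdot\mu_0)$ form a single $\operatorname{U}(\n)$-orbit; translating back through the dictionary of the first paragraph, any two algebraic $\HCFu$-solitons on $N$ differ by a unitary automorphism followed by a scaling, hence are homothetic. This proves the first assertion.

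Finally, to see that every algebraic soliton is expanding, I would pair the soliton equation with $K_\mu$ itself and use the two trace identities above: $\|K_\mu\|^2=\tr(K_\mu^2)=\tr\big(K_\mu(c\,\id+D)\big)=c\,\tr(K_\mu)+\tr(K_\mu D)=c\,\tr(K_\mu)$, so that $c=\|K_\mu\|^2/\tr(K_\mu)$. On a non-abelian $N$ the bracket $\mu$ is nonzero, whence $\tr(K_\mu)=-\tfrac14\|\mu\|^2<0$, while non-flatness gives $\|K_\mu\|^2>0$; therefore $c<0$. This is consistent with Theorem~\ref{main}: an algebraic soliton generates the self-similar solution $g_t=(1-ct)\,\varphi_t^*g$, which is immortal, as it must be by Theorem~\ref{main}, exactly when $c\le 0$. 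The single genuine obstacle in this program is the moment-map identification of the second paragraph; once that lemma is in hand, both the uniqueness and the sign of $c$ reduce to standard GIT and to the elementary trace computations above, with the $2$-step structure entering only to make the curvature of $\mu$ explicit.
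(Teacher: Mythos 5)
Your proposal's central lemma is false, and everything downstream inherits the error. You claim that $\tr(K_\mu A)=\tfrac14\la\pi(A)\mu,\mu\ra$ for \emph{all} $A\in{\rm End}(\n,J)$, i.e.\ that $K_\mu$ is (up to a constant) the moment map for the full ${\rm Gl}(\n,J)$-action. Two tests disprove this. First, take $A=\id_\n$: since $\pi(\id_\n)\mu=-\mu$, your identity would force $\tr \kk_\mu=-\tfrac14\lVert\mu\rVert^2<0$; but Proposition \ref{lemm_tens} exhibits $K(g)(Z_i,Z_{\bar i})$ as a sum of squared norms, so $\tr\kk_\mu=\tfrac12\lVert\mu\rVert^2>0$ (this positivity is exactly Proposition \ref{prop_stat}, the reason static metrics are \emph{shrinking}). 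Second, take $A={\rm pr}_{\z^\perp}$: by $2$-step nilpotency $\pi({\rm pr}_{\z^\perp})\mu=-2\mu$, so the right-hand side is $-\tfrac12\lVert\mu\rVert^2\neq 0$, while the left-hand side vanishes because $\kk_\mu$ annihilates $\z^\perp$ by \eqref{K_oper}. The correct statement --- the paper's Lemma \ref{lemm_GIT} --- is that $\kk_\mu$ is a moment map only for the \emph{subgroup} ${\rm Gl}(\z,J)$, i.e.\ the identity holds only when paired against $E\in\p(\z,J)$. Consequently both of your ``structural facts'' fail: $\tr K_\mu$ is positive, not negative, and $\tr(K_\mu D)$ does \emph{not} vanish for derivations --- for the soliton derivation, which has the block form \eqref{D_matrix}, one computes $\tr(\kk_\mu D)=-c\lVert\mu\rVert^2$. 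Your closing formula $c=\lVert K_\mu\rVert^2/\tr K_\mu$ therefore rests on two sign errors that happen to cancel in the conclusion; the valid version of this computation, using the correct identities, is $\lVert\kk_\mu\rVert^2=c\,\tr\kk_\mu+\tr(\kk_\mu D)=\tfrac{c}{2}\lVert\mu\rVert^2-c\lVert\mu\rVert^2$, giving $c=-2\lVert\kk_\mu\rVert^2/\lVert\mu\rVert^2<0$.

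The same misidentification breaks your uniqueness argument: Ness's theorem concerns critical points of the norm of the moment map of the group that is actually acting, and since $K_\mu$ is not a moment map for ${\rm Gl}(\n,J)$, you cannot invoke it on the projectivized ${\rm Gl}(\n,J)$-orbit. The paper instead runs the GIT argument for the ${\rm Gl}(\z,J)$-action, citing \cite[Corollary 9.4]{BL}, for which Lemma \ref{lemm_GIT} supplies a genuine moment map. Note also that you assume from the outset that the soliton derivation is Hermitian; in the paper the soliton equation reads $\kk_\mu=c\,\id_\n+\tfrac12(D+D^t)$, and proving that $D^t$ is again a derivation (hence that $D$ is symmetric, with block form \eqref{D_matrix}) is a genuine step, carried out via the identity $2\tr\kk_\mu[D,D^t]=\lVert\pi(D^t)\mu\rVert^2$ --- again using only the restricted moment map, which applies because $[D,D^t]\in\p(\z,J)$ after the block structure is established. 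Finally, the paper excludes $c>0$ not by a trace identity but by Theorem \ref{main} (a shrinking algebraic soliton would produce a finite-time singularity, contradicting immortality), and excludes $c=0$ by applying Lemma \ref{lemm_GIT} to $D_\z\in\p(\z,J)$. In short, your GIT skeleton is the right one, but it must be run for ${\rm Gl}(\z,J)$ rather than ${\rm Gl}(\n,J)$; making that change is not cosmetic --- it is what forces the paper's extra steps, and without them your sign and uniqueness arguments do not stand.
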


The proofs of our results are mainly based on the {\em bracket flow technique}, a powerful tool introduced by Lauret to study different geometric flows on homogeneous spaces (see e.g. \cite{LauGen}), and on a {\em moment map result} which we prove in Section \ref{sec_long}.\medskip

We mention that recently many works on different HCFs on homogeneous spaces appeared. In \cite{U18} Ustinovskiy studied the $\HCFu$ on complex homogeneous manifold focusing on a distinguished class of non-homogeneous metrics, namely the class of {\em induced metrics}; while, 
the $\HCFu$ on {\em C-spaces} has been investigated by Panelli and Podest\`a in \cite{PodPan19}. Moreover, in \cite{Stan20} Stanfield studied the $\HCFu$ on almost-abelian complex Lie groups.

The general behaviour of the `original' HCF on {\em complex unimodular Lie groups} was studied by Lafuente, Vezzoni and the author in \cite{LPV}, where it is proved that any left-invariant solution to the flow is immortal and converges to an expanding algebraic soliton, which has to be unique up to homotheties. Moreover, in \cite{P} it has been shown that expanding algebraic solitons on complex Lie groups lead to strong constrains on the algebraic structure. Finally, the behaviour of the `original' HCF of locally homogeneous non-K\"ahler metrics on {\em compact complex surfaces}, together with a convergence result, was investigated in \cite{PP}.

Also the {\em pluriclosed flow} has been widely studied in the homogeneous case. In \cite{EFV15} it was proved that the flow of left-invariant metrics on 2-step nilpotent Lie groups is immortal (see also \cite{PV}); while, in \cite{AL} convergence results for normalized solutions to the flow on almost abelian and 2-step nilpotent Lie groups were proved.\medskip

In \cite{FP19_2}, Fei and Phong showed that there exists a relation between the $\HCFu$ and the Anomaly flow, which is a metric flow introduced by Phong, Picard and Zhang in \cite{PPZ18} to study the Hull-Strominger system (see also \cite{PPZ18_2,PPZ18_3,PPZ19_2}). Namely,  Fei and Phong proved that any solution to the $\HCFu$ starting from a {\em conformally balanced} metric $\omega$ on $X$, i.e. a metric satisfying $d(\lVert\Psi\rVert^2_\omega\, \omega^{n-1})=0$ for a complex volume form $\Psi$ on $X$ with $\dim_\C X=n$, gives rise to a solution to the Anomaly flow after a time rescaling. Thus, since any left-invariant Hermitian metric on a complex unimodular Lie group is balanced \cite{AG86}, our results apply to the Anomaly flow when $\Psi$ is a left-invariant volume form.

\begin{corint} Let $N$ be a complex 2-step nilpotent Lie group and $\Psi$ a left-invariant volume form on $N$. Any left-invariant solution $\omega_t$ on $N$ to the Anomaly flow 
$$
\partial_t(\lVert \Psi\rVert_{\omega_t}\, \omega_t^{n-1}) = i\,\partial \bar\partial \omega_t^{n-2}
$$
is immortal. Moreover, $(1 + t)^{-1}\omega_t$ subconverges as $t\to \infty$ to a non-flat left-invariant metric $\bar \omega$ on $\bar N$ satisfying \eqref{soliton}, in the Cheeger-Gromov topology.  
\end{corint}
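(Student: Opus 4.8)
The plan is to reduce the Anomaly flow to the $\HCFu$ via the Fei-Phong correspondence and then quote Theorem~\ref{main}. First I would record the two structural facts that make $N$ fit into the Fei-Phong framework. Since a complex $2$-step nilpotent Lie group is in particular unimodular, \cite{AG86} guarantees that every left-invariant Hermitian metric on $N$ is balanced, i.e.\ $d\,\omega^{n-1}=0$. Moreover, because $\Psi$ is left-invariant and each $\omega_t$ is left-invariant, the pointwise norm $\lVert\Psi\rVert_{\omega_t}$ is a positive constant on $N$ for every fixed $t$; hence $d\big(\lVert\Psi\rVert^2_{\omega_t}\,\omega_t^{n-1}\big)=\lVert\Psi\rVert^2_{\omega_t}\,d\,\omega_t^{n-1}=0$, so in this setting balanced and conformally balanced coincide, and the initial datum $\omega_0$ is conformally balanced.

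Second, I would invoke the result of \cite{FP19_2}: along the $\HCFu$ the conformally balanced condition is preserved, and a solution starting from a conformally balanced metric solves the Anomaly flow after a time reparametrization. Restricting to left-invariant data, this yields an increasing reparametrization $t\mapsto\tau(t)$, determined by the space-constant quantity $\lVert\Psi\rVert_{\omega_t}$, matching the given left-invariant Anomaly solution $\omega_t$ with a left-invariant $\HCFu$ solution $g_\tau$ having the same initial metric (up to the constant conformal factor). Since existence and uniqueness of left-invariant solutions to the $\HCFu$ follow from the reduction to an ODE on the Lie algebra, this identification is unambiguous, and conversely the left-invariant $\HCFu$ solution produced by Theorem~\ref{main} pulls back to $\omega_t$ through $\tau$.

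Third, I would transport the conclusions of Theorem~\ref{main} through the reparametrization. By that theorem $g_\tau$ is immortal and $(1+\tau)^{-1}g_\tau$ subconverges, in the Cheeger-Gromov topology, to a non-flat algebraic soliton $(\bar N,\bar g)$, which is in particular a non-flat left-invariant metric on $\bar N$ satisfying \eqref{soliton}. Immortality of $g_\tau$ together with monotonicity of $\tau(t)$ gives immortality of $\omega_t$ once one knows that $\tau$ carries $[0,\infty)$ onto $[0,\infty)$; and the two normalizations $(1+t)^{-1}$ and $(1+\tau)^{-1}$ differ only by a space-constant scaling factor, which—being a homothety at each time—can be absorbed into the Cheeger-Gromov diffeomorphisms and scalings without affecting the limiting geometry. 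Passing to a subsequence, $(1+t)^{-1}\omega_t$ then subconverges to a rescaling $\bar\omega$ of $\bar g$, a non-flat left-invariant metric on $\bar N$ satisfying \eqref{soliton}, exactly as claimed.

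The main obstacle I expect is the bookkeeping of the reparametrization $\tau(t)$: one must verify that $\tau(t)\to\infty$ as $t\to\infty$, so that immortality and the long-time limit genuinely transfer, and that the discrepancy between $(1+t)^{-1}$ and $(1+\tau)^{-1}$ stays bounded above and away from zero, so that both the subconvergence and the non-flatness of the limit survive. Since a homothety $\omega\mapsto\lambda\omega$ rescales $\lVert\Psi\rVert_\omega$ by $\lambda^{-n/2}$, both the reparametrization rate and the normalization discrepancy are governed by the growth of $\lVert\Psi\rVert_{\omega_t}$ along the flow, so the crux is to control this norm using the linear growth of $g_\tau$ dictated by the $(1+\tau)^{-1}$ normalization in Theorem~\ref{main}.
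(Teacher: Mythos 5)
Your reduction is the same one the paper itself relies on (the Fei--Phong correspondence from \cite{FP19_2}, the balancedness of left-invariant metrics on unimodular complex groups from \cite{AG86}, then Theorem \ref{main}), and your observation that $\lVert\Psi\rVert_{\omega_t}$ is spatially constant is exactly what turns the correspondence into an identification of ODEs here. The problem is the verification you defer to the end: the claim that the discrepancy between the two normalizations ``stays bounded above and away from zero'' is not a routine check --- it is false, and this is a genuine gap. By \eqref{K_oper} the $\HCFu$ leaves $g_t|_{\z^\perp}$ fixed and shrinks the center block, so $\det g_\tau\to 0$ along the $\HCFu$; hence the conformal factor relating the two flows (a power of $\lVert\Psi\rVert$) blows up. It is indeed a homothety at each time, but an \emph{unbounded} one, and absorbing it changes which power of $(1+t)$ produces a non-flat limit.

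Concretely, on $\H_3(\C)$ take $\Psi=\alpha^1\wedge\alpha^2\wedge\alpha^3$ and $\omega_0=i\sum_k\alpha^k\wedge\bar\alpha^k$, where $\{\alpha^k\}$ is dual to a frame with $\mu(Z_1,Z_2)=Z_3$. For diagonal metrics $\omega=i\sum_k a_k\,\alpha^k\wedge\bar\alpha^k$ one has $\lVert\Psi\rVert_\omega=(a_1a_2a_3)^{-1/2}$ (up to a positive constant) and $i\partial\bar\partial\omega=a_3\,(i\alpha^1\wedge\bar\alpha^1)\wedge(i\alpha^2\wedge\bar\alpha^2)$, so the Anomaly flow becomes $2\,\tfrac{d}{dt}\sqrt{a_1a_2/a_3}=a_3$ with $\sqrt{a_1a_3/a_2}$ and $\sqrt{a_2a_3/a_1}$ constant, whose solution is $a_1=a_2=1+t/2$, $a_3\equiv1$. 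The $\HCFu$ solution with the same initial datum is $(1,1,(1+t)^{-1})$ (by Proposition \ref{lemm_tens} the flow only moves the center component), so the two families are related by the unbounded factor $\lVert\Psi\rVert_{\omega_t}^{-1}=1+t/2$ together with a linear time change. Consequently, the squared bracket norm measured with $(1+t)^{-1}\omega_t$ is proportional to $(1+t)\,a_3/(a_1a_2)\sim 4/t\to 0$, so $(1+t)^{-1}\omega_t$ Cheeger--Gromov converges to the \emph{flat} $\C^3$; a non-flat limit appears only at the scale $(1+t)^{-2}\omega_t$. (Using instead $\lVert\Psi\rVert^2_\omega$ in the flow, as in the paper's definition of conformally balanced, gives $a_1=a_2\equiv1$, $a_3=(1+t)^{-1/2}$, and the correct scale is $(1+t)^{-1/2}$ --- again not $(1+t)^{-1}$.) So the transfer cannot be completed by a boundedness argument: one must actually compute the asymptotics of $\lVert\Psi\rVert_{\omega_t}$, equivalently of $\det g_\tau$ along the $\HCFu$ via $\partial_\tau\log\det g_\tau=-\tfrac12\lVert\mu\rVert^2_{g_\tau}$, and the resulting exponent is structure-dependent and differs from $1$. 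Immortality does transfer (the factor grows only polynomially, so the reparametrization is onto $[0,\infty)$), but the convergence statement, with the normalization as written, does not follow from the argument you propose --- your own flagged ``obstacle'' is precisely where it breaks.
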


In \cite{PPZ19} Phong, Picard and Zhang studied the Anomaly flow on complex 3-dimensional unimodular Lie groups. We mention that our corollary slightly extend the long-time existence result obtained in \cite{PPZ19} for the complex Heisenberg Lie group. Finally, in \cite{PU} the author and Ugarte investigated the behaviour of the Anomaly flow on 2-step nilpotent Lie groups providing the first examples of the Anomaly flow with non-flat holomorphic bundle.\medskip

The paper is organized as follows. In Section \ref{metric_ev} we explicitly compute the components of the $\HCFu$-tensor in terms of the Lie bracket. We also recall the bracket flow technique. Section \ref{sec_long} is devoted to the proof of Theorem \ref{main}. Finally, in Section \ref{sec_sol} we prove Theorem \ref{main_sol} and we give an explicit example.\medskip 

\noindent {\bf Notation and conventions.} All over the paper we adopt the Einstein convention for the sum over repeated indices. By a {\em complex Lie group} we mean a Lie group endowed with a bi-invariant complex structure (that is, the multiplication is a holomorphic map).\smallskip

\noindent {\bf Acknowledgments.} The author warmly thanks Luigi Vezzoni and Ramiro Lafuente for their interest and helpful comments.

\section{Preliminaries}\label{metric_ev}
\subsection{The $\HCFu$ on Lie groups}
Let $(G,J)$ be a Lie group equipped with a left-invariant complex structure. Let $\g$ be the Lie algebra of $G$ and let $\mu$ be the Lie bracket on $\g$. In the following, we compute the tensor $K$ of a left-invariant Hermitian metric $g$ on $G$ in terms of structure constants of $\g$.\smallskip

The Chern connection $\nabla$ of $g$ is by definition the unique Hermitian connection ($\nabla g=\nabla J=0$) with vanishing (1,1)-part of the torsion. Therefore, given a left-invariant $g$-unitary frame $\{Z_1,\ldots,Z_n\}$ on $G$, we have
$$
T_{l\bar k}:=\nabla_l Z_{\bar k}-\nabla_{\bar k}Z_l-\mu(Z_l,Z_{\bar k})=0\,,
$$
or, in terms of the Christoffel symbols of $\nabla$,
$$
\Gamma_{l\bar k}^{\bar r}=\mu_{ l \bar k}^{\bar r}\,,\quad \Gamma_{\bar kl}^r=\mu_{\bar kl}^r\,.
$$
On the other hand, by $\nabla g=\nabla J=0$ we get
$$
g(\nabla_{l}Z_r,Z_{\bar j})=-g(Z_r,\nabla_{l}Z_{\bar j})=-g(Z_r,\mu(Z_l,Z_{\bar j}))=-\mu_{l\bar j}^{\bar r}\,,
$$
that is
\begin{equation}\label{Christ_sym}
\Gamma_{lr}^{j}=-\mu_{l\bar j}^{\bar r} \,.
\end{equation}
Therefore, we have
$$
\Omega_{k\bar l i\bar j}= g(\nabla_{k}\nabla_{\bar l}Z_i,Z_{\bar j})-g(\nabla_{\bar l}\nabla_{k}Z_i,Z_{\bar j})-g(\nabla_{\mu(Z_k,Z_{\bar l})}Z_i,Z_{\bar j})
$$
and hence
$$ 
S_{i\bar j}=-\mu_{\bar ki}^r\mu_{k\bar j}^{\bar r}+\mu_{k\bar r}^{\bar i} \mu_{\bar k r}^j+\mu_{k\bar k}^r\mu_{r\bar j}^{\bar i}-\mu_{k\bar k}^{\bar r}\mu_{\bar ri}^{j}\,.
$$
Finally, since 
$$
T_{ip}:=\nabla_{i}Z_p-\nabla_pZ_i-\mu(Z_i,Z_p)\,,
$$
by means of \eqref{Christ_sym} we have
$$
T_{ip}^k:=-\mu_{i\bar k}^{\bar p}+\mu_{p\bar k}^{\bar i}-\mu_{ip}^k\,,\quad 
T_{ip\bar r}=-\mu_{i\bar r}^{\bar p}+\mu_{p\bar r}^{\bar i}-\mu_{ip}^r\,,
$$
which implies
$$
2\,\widetilde Q_{i\bar j}=T_{\bar k\bar r i}T_{kr \bar j}=\left(-\mu_{\bar ki}^{r}+\mu_{\bar ri}^{k}-\mu_{\bar k\bar r}^{\bar i}\right)\left(-\mu_{k\bar j}^{\bar r}+\mu_{r\bar j}^{\bar k}-\mu_{kr}^j\right)\,.
$$

As a direct consequence we have the following proposition.

\begin{proposition}\label{lemm_tens} Let $(G,g)$ be a complex Lie group equipped with a left-invariant Hermitian metric. Then, with respect to a left-invariant $g$-unitary frame $\{Z_1,\ldots,Z_n\}$ on $G$, we have
$$
K(g)(Z_i,Z_{\bar j})= \frac {1}{2}\, g(\mu(Z_{\bar r},Z_{\bar p}),Z_i)\cdot g(\mu(Z_r,Z_p),Z_{\bar j})\,,
$$ 
\end{proposition}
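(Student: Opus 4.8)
The plan is to exploit the defining feature of a complex Lie group, namely that the left-invariant complex structure $J$ is bi-invariant, in order to collapse the general expressions for $S_{i\bar j}$ and $\widetilde Q_{i\bar j}$ derived above. Bi-invariance of $J$ is equivalent to the Lie bracket $\mu$ being $\C$-bilinear, that is $\mu(JX,Y)=\mu(X,JY)=J\,\mu(X,Y)$ for all $X,Y\in\g$. Extending $\mu$ to the complexification $\g^\C=\g^{1,0}\oplus\g^{0,1}$, this at once yields the type constraints
\b
\mu(\g^{1,0},\g^{1,0})\subseteq\g^{1,0}\,,\qquad \mu(\g^{0,1},\g^{0,1})\subseteq\g^{0,1}\,,\qquad \mu(\g^{1,0},\g^{0,1})=0\,.
\e
In terms of structure constants relative to the unitary frame $\{Z_1,\dots,Z_n\}$, where $Z_i\in\g^{1,0}$ and $Z_{\bar j}\in\g^{0,1}$, this translates into the vanishing of every \emph{mixed} bracket coefficient, $\mu_{i\bar j}^{k}=\mu_{i\bar j}^{\bar k}=0$, together with $\mu_{ij}^{\bar k}=0$ and $\mu_{\bar i\bar j}^{k}=0$.

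First I would substitute these vanishings into the formula for $S_{i\bar j}=-\mu_{\bar ki}^r\mu_{k\bar j}^{\bar r}+\mu_{k\bar r}^{\bar i}\mu_{\bar kr}^j+\mu_{k\bar k}^r\mu_{r\bar j}^{\bar i}-\mu_{k\bar k}^{\bar r}\mu_{\bar ri}^{j}$. Each of the four summands carries a factor coming from a mixed bracket (for instance $\mu_{\bar ki}^r$, $\mu_{k\bar j}^{\bar r}$, $\mu_{k\bar k}^r$, $\mu_{k\bar k}^{\bar r}$), so every term vanishes and $S(g)\equiv 0$. Hence on a complex Lie group $K(g)$ reduces to $\widetilde Q(g)$, consistent with the known vanishing of the second Chern--Ricci curvature in this setting.

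Next I would feed the same constraints into $2\,\widetilde Q_{i\bar j}=T_{\bar k\bar r i}T_{kr\bar j}$. In the first factor $-\mu_{\bar ki}^{r}+\mu_{\bar ri}^{k}-\mu_{\bar k\bar r}^{\bar i}$ the first two summands are mixed brackets and drop out, leaving only $-\mu_{\bar k\bar r}^{\bar i}$; symmetrically, the second factor $-\mu_{k\bar j}^{\bar r}+\mu_{r\bar j}^{\bar k}-\mu_{kr}^j$ collapses to $-\mu_{kr}^j$. Multiplying the two survivors gives $2\,\widetilde Q_{i\bar j}=\mu_{\bar k\bar r}^{\bar i}\,\mu_{kr}^j$.

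Finally I would recast this in frame-independent form. Since the frame is $g$-unitary we have $g(Z_a,Z_{\bar b})=\delta_{ab}$, so the surviving $(1,0)$-part of $\mu(Z_k,Z_r)$ gives $\mu_{kr}^j=g(\mu(Z_k,Z_r),Z_{\bar j})$, and likewise $\mu_{\bar k\bar r}^{\bar i}=g(\mu(Z_{\bar k},Z_{\bar r}),Z_i)$. Relabelling the summation indices $k\to r$ and $r\to p$ then produces
\b
K(g)(Z_i,Z_{\bar j})=\widetilde Q_{i\bar j}=\tfrac12\,g(\mu(Z_{\bar r},Z_{\bar p}),Z_i)\cdot g(\mu(Z_r,Z_p),Z_{\bar j})\,,
\e
as claimed. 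The argument is essentially bookkeeping once the $\C$-bilinearity of $\mu$ is in hand; the only mild obstacle is tracking carefully which of the structure constants appearing in $S$ and $\widetilde Q$ correspond to mixed brackets, so that exactly the right terms are discarded.
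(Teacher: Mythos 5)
Your proposal is correct and follows essentially the same route as the paper: the paper's proof simply observes that on a complex Lie group all mixed brackets $\mu(Z_l,Z_{\bar s})$ vanish and then invokes the same expressions for $S_{i\bar j}$ and $\widetilde Q_{i\bar j}$ computed just before the proposition. You have merely spelled out the bookkeeping (why bi-invariance of $J$ forces $\C$-bilinearity of $\mu$, hence the type constraints, hence $S\equiv 0$ and the collapse of $\widetilde Q$) that the paper leaves implicit.
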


\begin{proof} Since on complex Lie groups any mixed bracket $\mu(Z_l, Z_{\bar s})$ vanishes, the claim follows by the above computations.
\end{proof}

\subsection{The bracket flow technique}\label{sub_brack}
Here we briefly recall the bracket flow technique introduced by Lauret in \cite{LauMatAnn} to study the Ricci flow on nilmanifolds. This technique provides a method to study a prescribed geometric flow via a flow of Lie brackets and it has been extensively used to investigate different metric flows in Hermitian geometry (see e.g. \cite{AL}, \cite{EFV15}, \cite{LPV}, \cite{LauHer}, \cite{PV}). Under some natural assumptions, the bracket flow technique also applies to a large class of geometric structures on homogeneous spaces \cite{LauGen}.\smallskip

Let $(G,J)$ be a complex $n$-dimensional Lie group and let $\g$ be its Lie algebra. Then, the Lie bracket $\mu_0$ of $\g$ belongs to the {\em variety of complex Lie algebras}
\[
\mathcal C=\left\{\mu\in\Lambda^2\g^*\otimes \g :\text{$\mu$ satisfies the Jacobi identity and }\mu(J\cdot,\cdot)=J\mu(\cdot,\cdot)\right\}\,,
\]
which admits the `natural' action of 
$$
{\rm Gl}(\g,J) = \left\lbrace \varphi\in{\rm Gl}(\g) : \varphi\circ J=J\circ\varphi \right\rbrace\simeq {\rm Gl}_n(\C)
$$ 
defined by
\begin{equation}\label{act_alg}
\varphi\cdot\mu:=\varphi\,\mu(\varphi^{-1}\cdot,\varphi^{-1}\cdot)\,.
\end{equation}

Now, let us consider a left-invariant Hermitian metric $g_0$ on $G$. By the biholomorphisms invariance of $K$, there exists a unique solution $(g_t)_{t\in I}$, $0\in I\subseteq \R$, to the $\HCFu$ \eqref{HCF} consisting entirely of left-invariant Hermitian metrics. Moreover, in \cite[Theorem 1.1]{LauHer} Lauret proved that there exists a smooth curve $(\varphi_t)_{t\in I}\in{\rm Gl}(\g,J)$, with $\varphi_0=\id_\g$, such that
$$
g_t(\cdot,\cdot)=g_0(\varphi_t\cdot,\varphi_t\cdot)\,,
$$
and the family of Lie brackets
$$
\mu_t:= \varphi_t\cdot\mu_0
$$
satisfies the {\em bracket flow equation}
\begin{equation}\label{br_flow}
\frac{d}{dt}\mu_t = -\pi(\kk_{\mu_t})\mu_t\,,\qquad {\mu_t}_{\vert_0}=\mu_0\,.
\end{equation}
Here, $\pi: {\rm End}(\g)\to {\rm End}(\Lambda^2\g^*\otimes \g)$ denotes the representation of the action defined in \eqref{act_alg}, i.e.
\begin{equation}\label{act_brac}
\pi(A)\mu (\cdot,\cdot)= A\mu(\cdot,\cdot)-\mu(A\cdot,\cdot)-\mu(\cdot,A\cdot)\,,\qquad A\in{\rm End}(\g)\,;
\end{equation}
while $\kk_{\mu_t}:\g\to\g$ is the endomorphism related to the value of $K(g_t)$ at $e\in G$ by
\begin{equation}\label{end_tens}
\kk_{\mu_t} = \varphi_t \kk_{g_t} \varphi_t^{-1}\,,\qquad g_t(\kk_{g_t}\cdot,\cdot)=K(g_t)(\cdot,\cdot)\,.
\end{equation}

%
%

\section{The $\HCFu$ on complex 2-step nilpotent Lie groups}\label{sec_long}
This section is devoted to the proof of Theorem \ref{main}. The main feature in the proof will be a geometric invariant theory result, which implies the convergence claim.\medskip

Let $(N,J)$ be a complex 2-step nilpotent Lie group equipped with a left-invariant Hermitian metric $g$. Let $\n$ be the Lie algebra of $N$ and $\mu_0$ its Lie bracket. Let also $\z$ be the center of $(\n,\mu_0)$ and $\z^\perp$ its $g$-orthogonal complement, that is $\n=\z^\perp\oplus\z$. By means of Proposition \ref{lemm_tens}, the endomorphism $\kk_g$ defined in \eqref{end_tens} satisfies
$$
\kk_g(X)=0\,,\quad\text{for every}\,\, X\in\z^\perp\,,
$$
or, equivalently, 
\begin{equation}\label{K_oper}
\kk_g=\begin{bmatrix} 0 & 0\\ 0 & \ast \end{bmatrix}\,.
\end{equation}
with respect to the block representation $\n=\z^\perp\oplus\z$. As a direct consequence, we get

\begin{proposition}\label{prop_ort} The $\HCFu$ starting from a left-invariant Hermitian metrics on $N$ preserves the splitting $\g=\z^\perp\oplus\z$.
\end{proposition}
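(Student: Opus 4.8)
The plan is to argue in the metric flow picture, where the Lie bracket $\mu_0$ is held fixed while $g_t$ solves $\partial_t g_t = -K(g_t)$; in particular the center $\z$ of $(\n,\mu_0)$ does not move, and only the orthogonal complement of $\z$ is a priori time-dependent. Writing $V := \z^\perp$ for the $g_0$-orthogonal complement, it suffices to prove that $V$ stays $g_t$-orthogonal to $\z$, i.e. that the cross inner products $b_t(X,W) := g_t(X,W)$ with $X\in V$, $W\in\z$ vanish for all $t$; by construction $b_0\equiv 0$.

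The structural input is Proposition \ref{lemm_tens} together with 2-step nilpotency: since $\mu_0(\n,\n)\subseteq\z$, every bracket $\mu_0(Z_{\bar r},Z_{\bar p})$ and $\mu_0(Z_r,Z_p)$ occurring in the expression for $K(g_t)$ lies in $\z$. Hence $K(g_t)(X,W)$ is a finite sum of terms, each of the shape $g_t(c,X)\,g_t(c',W)$ with $c,c'\in\z$ --- this is the infinitesimal content already recorded in \eqref{K_oper}. I would then differentiate $b_t$ along \eqref{HCF}: for $X\in V$ and $W\in\z$,
\[
\frac{d}{dt}\,b_t(X,W) = -K(g_t)(X,W),
\]
and note that in each summand the factor $g_t(c,X)$, with $c\in\z$ and $X\in V=\z^\perp$, is itself a cross inner product, hence a value of $b_t$, while the complementary factor $g_t(c',W)$ pairs two elements of $\z$ and is therefore merely a (time-dependent) coefficient. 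Thus the right-hand side is linear and homogeneous in $b_t$.

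This reduces the statement to a linear homogeneous ODE $\tfrac{d}{dt}b_t=\mathcal L(t)\,b_t$ for the $V\times\z$ cross-block of the Gram matrix of $g_t$, with coefficients $\mathcal L(t)$ depending smoothly on $g_t$ over the interval of existence and with $b_0=0$. Uniqueness for linear ODEs then forces $b_t\equiv 0$, which is precisely the assertion that the splitting $\n=\z^\perp\oplus\z$ is preserved.

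The step demanding the most care is the verification that the reduction really yields a homogeneous linear system. The brackets entering $K(g_t)$ are naturally written in a $g_t$-unitary frame, so their $\z$-components vary with $t$; one must check that this dependence is confined to the coefficients $\mathcal L(t)$ and never contributes an inhomogeneous term. What makes this robust is that the decisive factor $g_t(c,X)$ vanishes whenever $b_t$ does, simply because $c\in\z$ for every $t$ --- a property of the fixed bracket $\mu_0$ rather than of the moving frame. I do not expect any genuine analytic difficulty beyond this bookkeeping.
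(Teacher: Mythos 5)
Your proof is correct and follows the same route as the paper: the paper derives Proposition \ref{prop_ort} as a ``direct consequence'' of the block vanishing \eqref{K_oper}, which rests on Proposition \ref{lemm_tens} together with $\mu(\n,\n)\subseteq\z$, exactly the structural input you use. Your linear-ODE/uniqueness argument for the cross block $b_t$ is simply the explicit bookkeeping the paper leaves implicit, and your key observation --- that the brackets lie in $\z_\C$ for all $t$ because this is a property of the fixed bracket, not of the moving unitary frame --- is precisely what makes the system homogeneous and the conclusion valid.
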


\begin{remark}\rm A generalization of Proposition \ref{prop_ort} holds for complex $k$-step nilpotent Lie group, with $k\geq2$. More precisely, let $(H,g)$ be a complex nilpotent Lie group equipped with a left-invariant Hermitian metric. Let also $\h$ be the Lie algebra of $H$, $\mu$ its Lie bracket and $\h'=\mu(\h,\h)$ its commutator. Then, $\kk_g(X)=0$ holds for every $X\in(\h')^\perp$, where $(\h')^\perp$ is the $g$-orthogonal complement of $\h'$.
\end{remark}

Now, let us consider the {\em variety of complex 2-step nilpotent Lie algebras} 
$$
\mathcal N:=\{\mu\in\mathcal C: \mu(\mu(\cdot,\cdot),\cdot)=0 \}\,,
$$ 
and the group
$$
{\rm Gl}(\z,J):=\{\tilde \varphi\in{\rm Gl}(\z): \tilde \varphi\circ J=J\circ\tilde \varphi\}\subset {\rm Gl}(\n,J)\,,
$$
which can be considered as a subgroup of ${\rm Gl}(\n,J)$ via the embedding map $\tilde \varphi \mapsto\scriptstyle \left(\begin{smallmatrix}\id &0 \\ 0 & \tilde \varphi\end{smallmatrix}\right)$. Moreover, let $\g\l(\z,J)$ be the Lie algebra of ${\rm Gl}(\z,J)$ and 
$$
\p(\z,J):=\g\l(\z,J)\cap \p\,,
$$
where $\p={\rm sym}(\n,\ip)$ is the set of symmetric endomorphisms of $\n$. Here $\ip$ denotes any inner product induced by $g$ on any tensor product of $\n$ and its dual. Then, we have the following fundamental lemma.

\begin{lemma}\label{lemm_GIT} The mapping 
\begin{equation}\label{mom_map}
\mathcal N\ \backslash \{ 0\} \xrightarrow{\Phi} \p(\z,J)\,, \qquad\mu \mapsto \frac{2}{\Vert \mu \Vert^2} \, \kk_\mu\,,
\end{equation}
is a moment map for the linear ${\rm Gl}(\z,J)$-action on $\mathcal N\ \backslash \{0\}$, in the sense of real geometric invariant theory. That is, 
\begin{equation}\label{eqn_mm}
	\la \kk_\mu, E \ra = \frac12 \, \la  \pi(E)\mu, \mu \ra, \qquad \mbox{for all }\,\, E\in \p(\z,J)\,, \,\,\, \mu \in \mathcal N\ \backslash \{ 0\}\,,
\end{equation}
where $\pi(E)\mu$ is defined as in \eqref{act_brac}. 
\end{lemma}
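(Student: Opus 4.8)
The statement to establish is, by its own phrasing, the single identity \eqref{eqn_mm}: with $\Phi(\mu)=\frac{2}{\Vert\mu\Vert^2}\kk_\mu$ one has $\la\Phi(\mu),E\ra=\frac{1}{\Vert\mu\Vert^2}\la\pi(E)\mu,\mu\ra$ for every $E\in\p(\z,J)$ exactly when \eqref{eqn_mm} holds, and this is the defining relation of the moment map for a linear reductive action in real geometric invariant theory. So the entire content reduces to checking
\[
\la\kk_\mu,E\ra=\tfrac12\,\la\pi(E)\mu,\mu\ra,\qquad E\in\p(\z,J),\ \mu\in\mathcal N\setminus\{0\}.
\]
My plan is to evaluate both sides in a left-invariant $g$-unitary frame adapted to $\n=\z^\perp\oplus\z$ and to match them using the explicit expression of Proposition \ref{lemm_tens}.

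The first step is a structural reduction that collapses the right-hand side. Since $\mu\in\mathcal N$ is $2$-step nilpotent, $\mu(\z,\cdot)=0$ and $\mu(\Lambda^2\n)\subseteq\z$, so $\mu$ is supported on $\Lambda^2\z^\perp$ with values in $\z$; on the other hand $E\in\p(\z,J)$ annihilates $\z^\perp$ and preserves $\z$. Feeding this into the definition \eqref{act_brac}, namely $\pi(E)\mu(X,Y)=E\,\mu(X,Y)-\mu(EX,Y)-\mu(X,EY)$, I expect the two correction terms to vanish for $X,Y\in\z^\perp$ (because $EX=EY=0$) and every term to vanish as soon as an input lies in $\z$. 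Hence $\pi(E)\mu=E\circ\mu$, post-composition of $\mu$ by $E$ on its image, and consequently $\la\pi(E)\mu,\mu\ra=\la E\circ\mu,\mu\ra$. This is the same mechanism that already produced \eqref{K_oper} and Proposition \ref{prop_ort}.

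The second step is to identify $\la E\circ\mu,\mu\ra$ with $2\,\la\kk_\mu,E\ra$. Writing the induced inner product on $\Lambda^2\n^*\otimes\n$ as a contraction over the frame gives $\la E\circ\mu,\mu\ra=\sum_{a,b}\la E\,\mu(Z_a,Z_b),\mu(Z_a,Z_b)\ra$, which is $\la E,M\ra$ for the symmetric endomorphism $M$ of $\z$ defined by $\la MX,Y\ra=\sum_{a,b}\la\mu(Z_a,Z_b),X\ra\,\la\mu(Z_a,Z_b),Y\ra$. It then remains to verify $M=2\,\kk_\mu$ on $\z$, which is precisely Proposition \ref{lemm_tens} rewritten in structure constants $\mu_{rp}^k=g(\mu(Z_r,Z_p),Z_{\bar k})$ as $(\kk_\mu)_{i\bar j}=\tfrac12\,\overline{\mu_{rp}^i}\,\mu_{rp}^j$; substituting this back reconstructs $\tfrac12\la\pi(E)\mu,\mu\ra=\la\kk_\mu,E\ra$.

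The main obstacle I anticipate is not conceptual but the normalization bookkeeping in this last identification: the factor-of-two relating the real inner product on $\p(\z,J)$ to the Hermitian trace, the conjugations arising when passing between $Z_r$ and $Z_{\bar r}$ in a unitary frame, and the combinatorial factor from summing over ordered rather than unordered index pairs. These must conspire so that $\tfrac12\la\pi(E)\mu,\mu\ra$ becomes $\la\kk_\mu,E\ra$ with no stray constant, which is exactly what forces the factor $2/\Vert\mu\Vert^2$ in the definition of $\Phi$. The $2$-step hypothesis together with the support of $E$ on $\z$ carries all the structural weight, so once this normalization is pinned down the identity \eqref{eqn_mm}, and hence the lemma, follows.
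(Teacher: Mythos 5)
Your proposal is correct and takes essentially the same approach as the paper's proof: both evaluate the two sides of \eqref{eqn_mm} in a unitary frame, use the 2-step nilpotency together with the block form of $E\in\p(\z,J)$ (annihilating $\z^\perp$, preserving $\z$) to reduce $\pi(E)\mu$ to $E\circ\mu$, and then match the resulting contraction with $\la \kk_\mu, E\ra$ via Proposition \ref{lemm_tens} and the symmetry of $E$. The paper's version is simply a terser rendition of this same computation written directly in structure constants.
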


\begin{proof}
Let $\{Z_i\}$ be a $\ip$-unitary basis of $\n$. By means of the 2-step nilpotency, one gets that
$$
\la \kk_\mu(Z_i), E(Z_{\bar i}) \ra = \frac 12 \mu_{kr}^i\mu_{\bar k \bar r}^{\bar j} E_{\bar i}^{\bar j}\,,\qquad \text{for any }  E\in \p(\z,J)\,.
$$
On the other hand,
$$
\la  \pi(E)\mu(Z_r,Z_k), \mu(Z_{\bar r},Z_{\bar k}) \ra = \la E\mu(Z_{ r},Z_{ k})\,,\mu(Z_{\bar r},Z_{\bar k})\ra= E_{i}^s\mu_{rk}^s\mu_{\bar r \bar k}^{\bar i}\,,
$$
and the claim follows by the symmetry of $E$.
\end{proof}

Let us stress that, a similar result have been found for the `original' HCF on complex unimodular Lie groups, and $\Phi$ is actually the restriction (up to a positive constant) of that moment map found in \cite{LPV}. Indeed, if we denote by
$$
\mathcal N\ \backslash \{ 0\} \rightarrow {\rm End}(\n)\,, \qquad\mu \mapsto \frac{4}{\Vert \mu \Vert^2} \, {\rm M}_\mu\,,
$$
the moment map associated to the `original' HCF, which satisfies
$$
	\la {\rm M}_\mu, A \ra = \frac12 \, \la  \pi(A)\mu, \mu \ra, \qquad \text{for all }\,\, A\in {\rm End}(\n)\,, \,\,\, \mu \in \mathcal N\ \backslash \{ 0\}\,,
$$
(see \cite[Sec. 3]{LPV}), a direct computation yields $\la {\rm M}_\mu,  E \ra=2\,\la \kk_\mu,  E \ra$ for every $ E\in\p(\z,J)$.

\begin{remark}\rm By means of \cite[Lemma 2.1]{LPV}, it follows that the endomorphism $\kk_\mu:\n\to\n$ is given by
$$
\kk_\mu= \begin{bmatrix} 0 & 0\\ 0 &  2\cdot{\rm pr_\z}({\rm Ric_\mu}) \end{bmatrix}\,,
$$
where ${\rm Ric_\mu}$ is the Ricci curvature endomorphism of the Ricci curvature tensor ${\rm Ric}(g)$ and ${\rm pr_\z}$ denotes its orthogonal projection to ${\rm End}(\z)$.
\end{remark}

We are now in a position to prove Theorem \ref{main}.

\begin{proof}[Proof of Theorem \ref{main}]

Let $g_t$ be the solution to the $\HCFu$ starting at $g_0$. By the equivalence of the $\HCFu$ and the bracket flow \eqref{br_flow}, it is enough to prove that $\mu_t$ is defined for all $t\in[0,+\infty)$. Therefore, since Lemma \ref{lemm_GIT} holds, we have
$$
\tfrac d{dt}\lVert\mu_t\rVert^2= 2\langle \tfrac d{dt}\mu_t,\mu_t\rangle= -2\langle \pi({\kk_{\mu_t}})\mu_t,\mu_t\rangle=-4  \lVert \kk_{\mu_t}\rVert^2\leq0\,,
$$
and the claim follows by standard ODE arguments.

Now, let $\nu_t:=\mu_t/\lVert\mu_t\rVert$ be the norm-normalized bracket flow. Then, by \cite[Lemma 2.3]{AL}, $\nu_t$ solves the {\em normalized bracket flow equation}
\begin{equation}\label{norm_brack}
\tfrac d{dt} \nu_t=-\pi(U_{\nu_t}+r_{\nu_t}\, \id_\n)\nu_t\,,
\end{equation}
where $r_\nu:=\langle \pi(\kk_{\nu})\nu,\nu\rangle=2 \lVert \kk_\nu\rVert^2$. On the other hand, since \eqref{mom_map} is a moment map, by means of \cite[Lemma 7.2]{BL} the normalized bracket flow turns out to be the negative gradient flow (up to a constant and a time reparameterization) of the real-analytic functional
$$
F:\mathcal N\ \backslash \{0\} \to \R\,,\quad \nu\mapsto\frac{\lVert \kk_\nu\rVert^2}{\lVert\nu\rVert^4}\,.$$
Moreover, since $\nu_t$ exists for all $t\geq0$ and the space of unitary bracket is compact, there must exist an accumulation point $\bar \nu$ of $\nu_t$. Then, by {\L}ojasiewicz's theorem on real-analytical gradient flow, $\nu_t\to\bar \nu$ as $t\to\infty$ and 
$$\pi(\kk_{\bar \nu}+r_{\bar \nu}\, \id_\n)\bar\nu=0\,,$$
i.e. $\bar\nu$ is a fixed point of \eqref{norm_brack}. Therefore, in view of \eqref{act_brac}, $\kk_{\bar \nu}+r_{\bar \nu}\, \id_\n$ is a derivation of $\n$ and its corresponding metric $\bar g$ is an algebraic $\HCFu$-soliton. A direct computation yields that 
$$ 
{\rm sc} (\bar g)=\tr \kk_{\bar \nu}=-\frac 12\,,
$$
and hence the soliton is non-flat.

Finally, arguing in the same fashion as \cite[Theorem A]{AL}, it is not hard to prove that $\lVert \mu(t)\rVert\sim t^{-1/2}$ as $t\to\infty$. On the other hand, scaling the metric by a factor ${c>0}$ is equivalent to scaling the corresponding bracket by a $c^{-1/2}$ factor (see \cite[$\S$2.1]{LauMZ}). Therefore, since the convergence of the brackets yields subconvergence for the corresponding family of left-invariant metrics in the Cheeger-Gromov sense \cite[Theorem 6.20]{LauLMS}, the claim follows.
\end{proof}

\section{Algebraic solitons to the $\HCFu$}\label{sec_sol}

Let $(G,g)$ be a complex non-abelian Lie group equipped with a left-invariant metric. Then, $g$ is said to be a {\em static metric} to the $\HCFu$ if
\begin{equation}\label{static}
K(g)=c\, g\,,
\end{equation}
for some $c\in \R$.
\begin{proposition}\label{prop_stat} Any left-invariant $\HCFu$-static metric $g$ on $G$ is shrinking (i.e. $c>0$).
\end{proposition}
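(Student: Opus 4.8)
The plan is to read off positivity directly from the explicit formula for $K(g)$ in Proposition \ref{lemm_tens} and then combine the static equation with a trace computation; the non-abelian hypothesis is what forces strict positivity.

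First I would fix a left-invariant $g$-unitary frame $\{Z_1,\ldots,Z_n\}$ on $G$, $n=\dim_\C G$, and write $\mu_{rp}^j:=g(\mu(Z_r,Z_p),Z_{\bar j})$, so that $g(\mu(Z_{\bar r},Z_{\bar p}),Z_i)=\overline{\mu_{rp}^i}$. In this notation Proposition \ref{lemm_tens} becomes $K(g)(Z_i,Z_{\bar j})=\tfrac12\,\overline{\mu_{rp}^i}\,\mu_{rp}^j$. The key observation is that this is a Gram-type expression, hence a nonnegative Hermitian form: for every $v=(v^i)\in\C^n$ one has
$$
K(g)(Z_i,Z_{\bar j})\,v^i\bar v^j=\tfrac12\sum_{r,p}\Big|\textstyle\sum_i \overline{\mu_{rp}^i}\,v^i\Big|^2\ge 0\,.
$$
Consequently the endomorphism $\kk_g$ determined by $g(\kk_g\cdot,\cdot)=K(g)(\cdot,\cdot)$ is self-adjoint and positive semidefinite.

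Next I would impose the static equation \eqref{static}, $K(g)=c\,g$, which is equivalent to $\kk_g=c\,\id_\g$. Taking the $g$-trace (equivalently, summing the diagonal entries over the unitary frame) yields
$$
c\,n=\tr\kk_g=\tfrac12\sum_{i,r,p}|\mu_{rp}^i|^2\,.
$$
Positivity of the right-hand side immediately gives $c\ge 0$, so it only remains to exclude the borderline case $c=0$.

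The single (and mild) obstacle is therefore ruling out $c=0$. But $c=0$ would force $\tr\kk_g=0$, hence $\sum_{i,r,p}|\mu_{rp}^i|^2=0$, i.e.\ $\mu_{rp}^i=0$ for all indices; this says precisely that the bracket $\mu$ vanishes, so $G$ would be abelian. Since $G$ is assumed non-abelian we have $\mu\ne 0$, whence $\tr\kk_g>0$ and therefore $c=\tfrac1n\tr\kk_g>0$, which is the claim. I do not expect any further difficulty, as every step reduces to the manifest positivity of the quadratic expression in Proposition \ref{lemm_tens} together with the non-abelian hypothesis.
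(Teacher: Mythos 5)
Your proof is correct and follows essentially the same route as the paper: the paper's one-line argument is precisely the trace identity $n\,c=\tr_g K(g)=\tfrac12\lVert\mu\rVert^2$ from Proposition \ref{lemm_tens}, combined with the standing non-abelian hypothesis to get strict positivity. Your additional observation that $K(g)$ is a positive semidefinite Gram-type form is a harmless elaboration of the same computation.
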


\begin{proof} The proof directly follows by Proposition \ref{lemm_tens}, since $n\,c={\tr_g K(g)}=\frac 12 \lVert\mu\rVert^2$.
\end{proof}

As a direct consequence, we get the following

\begin{corollary}\label{cor_stat} If $G$ is nilpotent, then there are no left-invariant $\HCFu$-static metrics on $G$.
\end{corollary}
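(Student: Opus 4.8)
The final statement to prove is Corollary \ref{cor_stat}: if $G$ is nilpotent, then there are no left-invariant $\HCFu$-static metrics on $G$. The plan is to argue by contradiction, combining the sign constraint from Proposition \ref{prop_stat} with the structural restriction on $\kk_g$ coming from nilpotency, as recorded in \eqref{K_oper}.

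First I would suppose that $g$ is a left-invariant static metric on the nilpotent group $G$, so that $K(g)=c\,g$ for some $c\in\R$. By Proposition \ref{prop_stat}, the static equation forces $c>0$, since $n\,c=\tr_g K(g)=\tfrac12\lVert\mu\rVert^2$, and this is strictly positive precisely because $G$ is non-abelian (so $\mu\neq 0$). In particular $K(g)$ must be a \emph{positive definite} multiple of $g$; equivalently, the associated endomorphism $\kk_g$ defined in \eqref{end_tens} is $\kk_g=c\,\id_\n$ with $c>0$, hence invertible with no kernel.

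Next I would invoke the structural fact that on a nilpotent Lie group the endomorphism $\kk_g$ annihilates a nontrivial subspace. This is exactly the content of the Remark following Proposition \ref{prop_ort}, which generalizes \eqref{K_oper} to arbitrary nilpotent $H$: one has $\kk_g(X)=0$ for every $X\in(\h')^\perp$, where $\h'=\mu(\h,\h)$ is the commutator. Since $G$ is nilpotent and non-abelian, the commutator $\h'$ is a proper subspace of $\h$, so its orthogonal complement $(\h')^\perp$ is nonzero. Therefore $\kk_g$ has nontrivial kernel, contradicting the invertibility $\kk_g=c\,\id_\n$ with $c>0$ obtained above. This contradiction rules out the existence of a static metric.

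The main point to get right is the interplay of the two constraints: the trace computation of Proposition \ref{prop_stat} guarantees $c>0$, while the nilpotency forces $\kk_g$ to be degenerate, and these are incompatible for a non-abelian group. I do not expect a serious obstacle here, since both ingredients are already established in the excerpt; the only care needed is the elementary observation that non-abelianness makes $\mu\neq0$ (so $c>0$ is strict) and simultaneously makes $(\h')^\perp\neq\{0\}$ (so the kernel of $\kk_g$ is genuinely nontrivial), which together deliver the contradiction.
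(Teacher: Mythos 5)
Your proof is correct and follows essentially the same route as the paper's: both combine the strict positivity $c>0$ from Proposition \ref{prop_stat} with the vanishing of $K(g)$ on directions orthogonal to the commutator, which nilpotency guarantees is a proper subspace. The only cosmetic difference is that you quote the Remark generalizing \eqref{K_oper} (i.e.\ $\kk_g=0$ on $(\h')^\perp$), whereas the paper derives the same vanishing directly from Proposition \ref{lemm_tens} applied to a unitary basis vector orthogonal to $\mu(\g,\g)$.
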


\begin{proof} Let us suppose that there exists a left-invariant metric $g$ on $G$ satisfying \eqref{static}. Let $\{Z_1,\ldots,Z_n\}$ be a $\ip$-unitary basis of $\g$. Since $G$ is nilpotent, there exists at least one $Z_i$ such that $\la Z_i\ra\perp \mu(\g,\g)$. By means of Proposition \ref{lemm_tens}, one gets
$$
K(g)(Z_i,Z_{\bar i})=0\,,
$$
which is not possible by Proposition \ref{prop_stat}, and hence the claim follows.
\end{proof}

\begin{remark}\rm Given a complex Lie algebra $(\g,J)$, the complex structure $J$ preserves both the center $\z$ and the commutator $\mu(\g,\g)$ of $\g$. This was implicitly used in the proof of Corollary \ref{cor_stat}.
\end{remark}

Let us now focus on soliton metrics.

\begin{definition} A left-invariant metric $g$ on $G$ is said to be an {\em algebraic $\HCFu$-soliton} if 
$$
K(g)=c\, g+ \frac12\,\big( g(D\cdot,\cdot)+g(\cdot,D\cdot)\big)\,,
$$
for some $c\in \R$ and $D\in{\rm Der}(\g)$.
\end{definition}
\smallskip


We are now in a position to prove Theorem \ref{main_sol}.

\begin{proof}[Proof of Theorem \ref{main_sol}] Let $(N,g)$ be a complex (non-abelian) 2-step nilpotent Lie group equipped with an algebraic $\HCFu$-soliton metric. Moreover, let $\n$ be the Lie algebra of $N$, $\mu$ its Lie bracket and let us consider the $\ip$-orthogonal splitting $\n=\z^\perp\oplus\z$, where $\z$ is center of $\n$. Then, by means of \eqref{end_tens}, the algebraic soliton condition can be written in terms of $\kk_\mu$ as
\begin{equation}\label{sol_oper}
\kk_\mu=c\, \id_\n + \frac12\, \big(D+D^t\big)\,,
\end{equation}
where $(\cdot)^t$ denotes the transpose with respect to the inner product $\ip$ on $\n$. \smallskip

We first show that any derivation $D$ in \eqref{sol_oper} is a symmetric operator, for which its enough to prove that $D^t$ is also a derivation. Since $\n$ is a complex 2-step nilpotent Lie algebra, it follows that
$$
D=\begin{bmatrix} \ast &0 \\\ast& \ast\end{bmatrix}
$$
where the blocks are in terms of $\n=\z^\perp\oplus\z$. On the other hand, by means of \eqref{K_oper} and \eqref{sol_oper}, we have
\begin{equation}\label{D_matrix}
D=\begin{bmatrix} -c\,\id_{\z^\perp}&0\\0& D_{\z}\end{bmatrix}\,.
\end{equation}
Therefore, since the representation $\pi$ is a Lie algebra morphism which satisfies $\pi(A^t)=\pi(A)^t$ and Lemma \ref{lemm_GIT} holds, we get
$$
2 \tr \kk_\mu [D,D^t]=\la \pi([D,D^t])\mu,\mu\ra=\la [\pi(D),\pi(D^t)]\mu,\mu\ra= \lVert\pi(D^t)\mu\rVert^2\,;
$$
while, equation \eqref{sol_oper} implies
$$
\tr \kk_\mu [D,D^t]= c\, \tr [D,D^t]+ \frac 12 \tr D[D,D^t]+ \frac 12 \tr D^t[D,D^t]=0\,,
$$
and the claim follows by $\lVert\pi(D^t)\mu\rVert^2=0$, i.e. $D^t\in{\rm Der}(\n)$. \smallskip

We are now in a position to prove that any algebraic $\HCFu$-soliton on $N$ is expanding (i.e. $c<0$). Let us focus on soliton metrics satisfying \eqref{sol_oper}. In view of Corollary \ref{cor_stat}, we already know that $D\neq 0$. Moreover, by means of Theorem \ref{main}, a soliton metric cannot be shrinking (i.e. $c>0$), since it would give rise to a self-similar solution to the $\HCFu$ with a finite-time singularity (see e.g. \cite{LauHer}). Thus, we get $c\leq0$ in \eqref{sol_oper}. On the other hand, if we assume $c=0$, it directly follows by \eqref{D_matrix} that 
\begin{equation}\label{contr_proof}
\tr D_\z^2 = \tr \kk_\mu D_\z = \la \pi(D_\z)\mu,\mu\ra=0\,,
\end{equation}
which is in contradiction with our hypothesis $D\neq 0$. Hence, every algebraic $\HCFu$-soliton on $N$ has to be expanding.\smallskip

To finish the proof, we now prove the uniqueness claim. By the proof of Theorem \ref{main}, the operator $\kk_\mu$ gives rise to an algebraic $\HCFu$-soliton if and only if it is a critical point of the functional $F(\mu)=\lVert\kk_\mu\rVert^2/\lVert\mu\rVert^4$. On the other hand, it has been proved in \cite[Corollary 9.4]{BL} that critical points for the norm of the moment map which lie in a fixed orbit ${\rm Gl}(\z,J)\cdot\mu$ must actually lie in the same $(\p(\z,J)\cdot \mu)$-orbit. Then, since two brackets in the same $\p(\z,J)$-orbit correspond to isometric left-invariant metrics on $N$, this conclude the proof.
\end{proof}

As a direct consequence of the proof of Theorem \ref{main_sol}, one gets the following corollary.

\begin{corollary} Let $g$ be an algebraic $\HCFu$-soliton on $N$. Then, the $\HCFu$-tensor satisfies
$$
K(g)=c\, g+ g(D\cdot,\cdot)\,,
$$
for some $c<0$ and $D\in{\rm Der}(\n)$.
\end{corollary}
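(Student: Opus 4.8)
The plan is to recognize that this corollary is essentially a direct consequence of what was already proved for Theorem \ref{main_sol}, the only substantive point being that the symmetrized Lie-derivative term in the definition of an algebraic soliton collapses to the single expression $g(D\cdot,\cdot)$ once the derivation is chosen symmetric. Recall that the definition involves $\frac12\big(g(D\cdot,\cdot)+g(\cdot,D\cdot)\big)$; since $g(\cdot,D\cdot)=g(D^t\cdot,\cdot)$ by definition of the transpose, this symmetrized tensor equals $g\big(\tfrac12(D+D^t)\cdot,\cdot\big)$, so the whole matter hinges on whether $\tfrac12(D+D^t)$ is again a derivation.

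First I would invoke the first half of the proof of Theorem \ref{main_sol}, where it was shown that if $D$ realizes the soliton identity \eqref{sol_oper} then $D^t\in{\rm Der}(\n)$ as well, the key step being the vanishing $\lVert\pi(D^t)\mu\rVert^2=0$. Consequently $\tilde D:=\tfrac12(D+D^t)$ is a sum of two derivations, hence itself a derivation, and it is symmetric by construction. In operator form the soliton identity then reads $\kk_\mu=c\,\id_\n+\tilde D$ with $\tilde D\in{\rm Der}(\n)$ symmetric, this being literally the content of \eqref{sol_oper}.

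Next I would transcribe this operator identity back into tensor form through the dictionary \eqref{end_tens}. Using $g(\kk_\mu\cdot,\cdot)=K(g)(\cdot,\cdot)$ together with the symmetry of $\tilde D$, one obtains
$$
K(g)(\cdot,\cdot)=g\big((c\,\id_\n+\tilde D)\cdot,\cdot\big)=c\,g(\cdot,\cdot)+g(\tilde D\cdot,\cdot)\,,
$$
in which $g(\tilde D\cdot,\cdot)$ is automatically a symmetric $(1,1)$-tensor, as it must be to agree with $K(g)$. Relabelling $\tilde D$ as $D$ gives exactly $K(g)=c\,g+g(D\cdot,\cdot)$ with $D\in{\rm Der}(\n)$, while the sign $c<0$ is precisely the expanding conclusion already established in Theorem \ref{main_sol}.

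I do not expect any genuine obstacle here: the argument is bookkeeping layered on top of Theorem \ref{main_sol}, and the one delicate ingredient, namely that the derivation may be taken symmetric, has already been settled there. If anything merits care, it is only checking that the symmetrization identity $g(\cdot,D\cdot)=g(D^t\cdot,\cdot)$ and the operator-to-tensor passage \eqref{end_tens} are applied consistently, so that no spurious factor or transpose is introduced when moving from $\kk_\mu$ to $K(g)$.
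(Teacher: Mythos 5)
Your proposal is correct and takes essentially the same route as the paper, which states this corollary as ``a direct consequence of the proof of Theorem \ref{main_sol}'' without further argument: the intended content is exactly what you supply, namely that since $D^t\in{\rm Der}(\n)$ (shown there via $\lVert\pi(D^t)\mu\rVert^2=0$), the symmetrization $\tfrac12(D+D^t)$ is a symmetric derivation, so the term $\tfrac12\big(g(D\cdot,\cdot)+g(\cdot,D\cdot)\big)$ collapses to $g(\tilde D\cdot,\cdot)$, while $c<0$ is the expanding conclusion of the theorem. Your care in replacing $D$ by its symmetrization $\tilde D$ (rather than asserting $D$ itself is symmetric) is, if anything, slightly more precise than the paper's phrasing.
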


\subsection{An explicit example}

Let $\H_3(\C)$ be the complex 3-dimensional Heisenberg Lie group, which can be defined as the matrix group 
$$
\H_3(\C)=\left\lbrace \left(\begin{matrix} 1& z_1& z_2\\ 0&1&z_3\\0&0&1\end{matrix}\right):z_i\in\C\right\rbrace\,.
$$
This group is 2-step nilpotent and admits a left-invariant $(1,0)$-frame $\{Z_1,Z_2,Z_3\}$ such that
$$\mu(Z_1,Z_2)=Z_3\,.$$

\begin{proposition} \label{Heisen} Every left-invariant Hermitian metric on $\mathbb H_3(\C)$ is an expanding algebraic $\HCFu$-soliton.
\end{proposition}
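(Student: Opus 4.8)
The plan is to exploit the explicit structure constant $\mu(Z_1,Z_2)=Z_3$ together with the results already established for complex 2-step nilpotent Lie groups. Since $\mathbb H_3(\C)$ is 2-step nilpotent and non-abelian, Theorem \ref{main_sol} guarantees that \emph{at most one} algebraic $\HCFu$-soliton exists up to homotheties and that any such soliton is expanding. Hence the real content of this proposition is an \emph{existence and uniformity} statement: one must show that \emph{every} left-invariant Hermitian metric $g$ on $\mathbb H_3(\C)$ already satisfies the algebraic soliton equation \eqref{sol_oper}, namely
$$
\kk_\mu=c\,\id_\n+\tfrac12\,(D+D^t)\,,\qquad c<0,\ D\in{\rm Der}(\n)\,.
$$

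First I would reduce to a normalized frame. The center of $\n$ is $\z=\la Z_3\ra$ with orthogonal complement $\z^\perp=\la Z_1,Z_2\ra$, so $\n=\z^\perp\oplus\z$ and, by \eqref{K_oper}, $\kk_\mu$ is supported on the one-dimensional block $\z$. For any left-invariant Hermitian metric I can choose a $\ip$-unitary basis adapted to this splitting; Proposition \ref{lemm_tens} then gives $\kk_\mu$ explicitly in terms of the single nonzero bracket coefficient, so that
$$
\kk_\mu=\begin{bmatrix}0&0\\0&\lambda\end{bmatrix}
$$
for some $\lambda>0$ (positivity follows because the only surviving term is $\tfrac12|\mu(Z_{\bar 1},Z_{\bar 2})|^2>0$, matching the computation $\tr\kk_\mu=\tfrac12\lVert\mu\rVert^2$ used in Proposition \ref{prop_stat}). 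The key structural point is that $\z$ is one-dimensional, so there is no room for $\kk_\mu$ to fail to be diagonalized by the center splitting.

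Next I would produce the derivation. Take
$$
D=\begin{bmatrix}-c\,\id_{\z^\perp}&0\\0&D_\z\end{bmatrix}
$$
as in \eqref{D_matrix}, where on the grading $\n=\z^\perp\oplus\z$ a diagonal endomorphism acting as $a$ on $\z^\perp$ and $b$ on $\z$ is a derivation precisely when it respects the bracket, i.e. $2a=b$ (since $\mu(Z_1,Z_2)\in\z$ picks up $a$ from each factor and must equal $b$ on the image). Thus the diagonal endomorphism scaling $\z^\perp$ by $-c$ and $\z$ by $-2c$ is automatically a derivation of $\n$. Matching this against $\kk_\mu=c\,\id_\n+\tfrac12(D+D^t)$ in the two blocks forces $0=c-c=0$ on $\z^\perp$ (consistent) and $\lambda=c+\tfrac12(-2c+(-2c))=c-2c=-c$ on $\z$; hence $c=-\lambda<0$ and the soliton equation holds with this $D\in{\rm Der}(\n)$. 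The expanding sign $c<0$ then comes for free from $\lambda>0$, consistent with Theorem \ref{main_sol}.

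The step I expect to require the most care is verifying that the \emph{same} diagonal ansatz works for every left-invariant metric, not just for one preferred normalization. Concretely, after changing the metric one changes the unitary frame by an element of ${\rm Gl}(\n,J)$, and one must check that the adapted splitting $\n=\z^\perp\oplus\z$ and the resulting diagonal form of $\kk_\mu$ persist; this is where the one-dimensionality of $\z$ does the real work, since it guarantees $\kk_\mu|_\z$ is a positive scalar for \emph{any} metric and thus the derivation $D$ above can always be arranged. Once this uniformity is in hand the proposition follows immediately, and one may also remark that all these solitons are homothetic, in agreement with the uniqueness part of Theorem \ref{main_sol}.
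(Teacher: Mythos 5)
Your proposal is correct and takes essentially the same route as the paper: pick a $g$-unitary frame adapted to the splitting $\n=\z^\perp\oplus\z$, observe that $\kk_\mu$ vanishes on $\z^\perp$ and is a positive scalar $\lambda$ on the one-dimensional center, and then solve the soliton equation with a diagonal derivation, which forces $c=-\lambda<0$. The paper packages this as the explicit normalization $\mu(W_1,W_2)=sW_3$ with $D:=\kk_g-c\,\id$ and $c=-\tfrac12|s|^2$, but the computation is the same as yours.
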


\begin{proof} Let $g$ be a left-invariant Hermitian metric on $\mathbb H_3(\C)$. Then, there exists a $g$-unitary (1,0)-frame $\{W_1,W_2,W_3\}$ such that $\mu$ satisfies
$$\mu(W_1,W_2)= s W_3\,, \quad \text{for some}\,\, s\in\C\,.$$
With respect to this new frame, we have
$$K_{g}=\frac 12\left(\begin{matrix} 0 & 0 & 0\\ 0 & 0 & 0 \\ 0 & 0 & |s|^2\end{matrix}\right)\,.$$
If we set $D:= K_{g}-c\,I$, then
$$DW_1=D_{11} W_1\,,\quad DW_2=D_{22} W_2\,,\quad DW_3=D_{33} W_3\,,$$
for some $D_i^i\in \R$, and hence $D$ is a derivation if and only if
$$D\mu(W_1,W_2)-\mu(DW_1,W_2)-\mu(W_1,DW_2) = (D_{33}-D_{11}-D_{22})W_3=0\,.$$
Finally, setting
$$c= -\frac 12|s|^2$$
the claim follows.
\end{proof}

\bibliographystyle{abbrv}
\bibliography{Biblio.bib}

\end{document}